\theoremstyle{plain}
\newtheorem{theorem}{Theorem}[section]
\newtheorem{lemma}[theorem]{Lemma}
\newtheorem{cor}[theorem]{Corollary}
\newtheorem{prop}[theorem]{Proposition}
\theoremstyle{definition}
\theoremstyle{remark}
\newcommand{\lemref}[1]{\hyperref[#1]{Lemma \ref*{#1}}}
\newcommand{\thmref}[1]{\hyperref[#1]{Theorem \ref*{#1}}}
\newcommand{\propref}[1]{\hyperref[#1]{Proposition \ref*{#1}}}
\newcommand{\corref}[1]{\hyperref[#1]{Corollary \ref*{#1}}}
\newcommand{\defref}[1]{\hyperref[#1]{Definition \ref*{#1}}}
\newcommand{\remref}[1]{\hyperref[#1]{Remark \ref*{#1}}}
\newcommand{\conjref}[1]{\hyperref[#1]{Conjecture \ref*{#1}}}
\newcommand{\Ker}{\mathrm{Ker}}
\newcommand*{\defeq}{\mathrel{\rlap{%
                     \raisebox{0.27ex}{$\m@th\cdot$}}%
                     \raisebox{-0.27ex}{$\m@th\cdot$}}%
                     =}
\numberwithin{equation}{section}
\def\@setcopyright{}
\def\serieslogo@{}
\begin{document}

\author{Mark Shusterman}
\address{Raymond and Beverly Sackler School of Mathematical Sciences, Tel-Aviv University, Tel-Aviv, Israel}
\email{markshus@mail.tau.ac.il} 
\title{Schreier's formula for Prosupersolvable groups}
\begin{abstract}
We classify the finitely generated prosupersolvable groups that satisfy Schreier's formula for the number of generators of open subgroups.
\end{abstract}



   


\maketitle

\section{Introduction}

The famous Nielsen-Schreier theorem asserts that a subgroup $H$ of a (nonabelian) free group $F$ is itself free, 
and if $F$ is finitely generated and $[F : H] < \infty$, then 
\begin{equation} \label{S-formulaEq}
d(H) = (d(F)-1)[F : H] + 1.
\end{equation} 
Accordingly, given an integer $n > 1$, we say that a group $G$ satisfies Schreier's formula for $n$,
if for every finite index subgroup $H$ of $G$ we have 
\begin{equation} \label{SnFormulaEq}
d(H) = (n-1)[G : H] + 1.
\end{equation}
We also say that $G$ `satisfies Schreier's formula' omitting $n$ (which is equal to $d(G)$).
Following \cite[Proposition 3.6]{AS}, one can show that free groups are the only residually finite groups that satisfy Schreier's formula. 
In light of that, it is tempting to consider the profinite analogue, which has already been studied in \cite{AS, J, JL, L, LD}.

All notions considered here for profinite groups should be understood in the topological sense. 
For instance, we say that a profinite group $\Gamma$ satisfies Schreier's formula for an integer $n > 1$,
if \eqref{SnFormulaEq} holds for every open subgroup $H$ of $\Gamma$, 
where $d(H)$ is the least cardinality of a (topological) generating set for $H$. 
By \cite[Corollary 4.4]{L}, a pronilpotent group satisfies Schreier's formula if and only if it is a free pro-$p$ group for some prime number $p$. 
Yet, it is unknown which prosolvable groups satisfy Schreier's formula. 
Results in this direction include an example \cite[Example 2.6 (D)]{LD} of a prosolvable group with finite Sylow subgroups that satisfies Schreier's formula, 
and a necessary condition \cite[Theorem 5.9]{JL} for the formula to hold.
Here we confine our attention to a subclass of finite solvable groups, which contains all finite nilpotent groups.

Recall that a finite group $G$ is called supersolvable if there exists a normal series
\begin{equation} \label{SuperSolDefEq}
\{1\} = H_0 \lhd H_1 \lhd \dots \lhd H_{n-1} \lhd H_{n} = G
\end{equation}
such that for each $0 \leq i \leq n-1$, the group $H_{i+1}/H_i$ is cyclic and $H_i \lhd G$.
In other words, all the chief factors of $G$ are cyclic.
Accordingly, a profinite group $\Gamma$ is said to be prosupersolvable, 
if every finite homomorphic image of it is supersolvable. 
Some notable works exploring prosupersolvable groups are \cite{AS, OR, RZ, Sm}.

In this work we refine \cite[Theorem 1.1]{AS} by classifying the prosupersolvable groups that satisfy Schreier's formula for an integer $n > 1$. 
For a classical result recovering the structure of a group from the numbers of generators of open subgroups,
see \cite{DL}.

\begin{theorem} \label{FirstRes}

Let $\Gamma$ be a profinite group, and let $n > 1$ be an integer. 
The following two conditions are equivalent. 

\begin{enumerate}

\item $\Gamma$ is a prosupersolvable group that satisfies Schreier's formula for $n$.

\item There exists a prime $p$ and an $n$-generated finite abelian group $A$ of exponent dividing $p-1$, such that
$\Gamma \cong T \rtimes A$, where $T$ is a free pro-$p$ group on the set $\big( \{1, \dots, n-1\} \times A \big) \cup \{u\}$, and the action of $A$ on $T$ is given by 
\begin{equation*} \label{ActDefEq}
au = u, \quad a(j,b) = (j,ab)
\end{equation*}
for all $a,b \in A$ and $1 \leq j \leq n-1$.

\end{enumerate}

\end{theorem}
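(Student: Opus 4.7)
The plan is to prove the two implications separately. Direction $(2) \Rightarrow (1)$ is largely a direct calculation, while $(1) \Rightarrow (2)$ requires extracting structural information from the hypothesis.

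For $(2) \Rightarrow (1)$, I would first verify prosupersolvability by examining an arbitrary finite quotient $\bar\Gamma$ of $\Gamma = T \rtimes A$, which takes the form $P \rtimes A$ for a finite $p$-group $P$. Since $A$ is abelian of exponent dividing $p-1$, the field $\mathbb{F}_p$ contains all $\exp(A)$-th roots of unity, so every $\mathbb{F}_p[A]$-module is a direct sum of one-dimensional characters; consequently every chief factor of $\bar\Gamma$ lying inside $P$ is cyclic of order $p$, while chief factors above $P$ are cyclic sections of the abelian group $A$. To verify Schreier's formula for an open $H \leq \Gamma$, set $A_H := HT/T \leq A$ and note that $H \cong (T \cap H) \rtimes A_H$ via Schur--Zassenhaus, with $|A_H|$ coprime to $p$. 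The subgroup $T \cap H$ is open in the free pro-$p$ group $T$ of rank $(n-1)|A|+1$, so the pro-$p$ Nielsen--Schreier theorem fixes its rank. Since $|A_H|$ is coprime to $p$, the maximal pro-$p$ quotient of $H$ has $\mathbb{F}_p$-dimension $\dim_{\mathbb{F}_p}((T \cap H)/\Phi(T \cap H))^{A_H}$, which via the $A_H$-module structure provided by Reidemeister--Schreier and the identity $[T : T \cap H][A : A_H] = [\Gamma : H]$ evaluates to $(n-1)[\Gamma : H] + 1$. The $q$-rank of $H$ for $q \neq p$ is bounded by $d(A_H) \leq n$, which in turn does not exceed $d_p(H)$, so $d(H) = (n-1)[\Gamma : H] + 1$.

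For $(1) \Rightarrow (2)$, I would invoke \cite[Theorem 1.1]{AS}, the result being refined, to extract a distinguished prime $p$ and a closed normal free pro-$p$ subgroup $T \lhd \Gamma$ with $A := \Gamma/T$ finite abelian of order coprime to $p$. Applying Schreier's formula to the open subgroup $T$ itself pins down $d(T) = (n-1)|A|+1$. To force the exponent of $A$ to divide $p-1$, I would argue by contradiction: otherwise an irreducible $\mathbb{F}_p[A]$-subquotient of $V := T/\Phi(T)$ would have dimension greater than one (as $\mathbb{F}_p$ would not be a splitting field for $A$), producing a non-cyclic chief factor of some finite quotient of $\Gamma$ and contradicting prosupersolvability.

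The main obstacle is determining the precise $\mathbb{F}_p[A]$-module structure of $V$, which must come out to be $\mathbb{F}_p \oplus \mathbb{F}_p[A]^{\oplus(n-1)}$. To extract this, I would apply Schreier's formula to each open subgroup $H_B := TB$ for $B \leq A$: since $|B|$ is coprime to $p$, the rank of the maximal pro-$p$ quotient of $H_B$ equals $\dim V^B = \sum_{\chi|_B = 1} m_\chi$, where $m_\chi$ denotes the multiplicity of the character $\chi \in \widehat{A}$ in $V$. Equating with $(n-1)[A:B]+1$ for every $B \leq A$ yields a system of linear constraints whose unique solution is $m_1 = n$ and $m_\chi = n-1$ for each nontrivial $\chi$ (checked by specializing at $B = A$ to get $m_1 = n$, at $B = 1$ to get the total, and by Möbius inversion over the subgroup lattice for the intermediate constraints). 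Lifting an adapted $A$-equivariant $\mathbb{F}_p$-basis of $V$ to a minimal generating set of the free pro-$p$ group $T$ then produces the indexing set $\{1,\dots,n-1\} \times A \cup \{u\}$ with $A$ acting as prescribed.
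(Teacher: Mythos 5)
Your plan for $(1)\Rightarrow(2)$ begins by citing \cite[Theorem 1.1]{AS} to produce an open normal free pro-$p$ subgroup $T$ with $\Gamma/T$ finite abelian of coprime order, but that result concerns varieties of finite supersolvable groups (equivalently, relatively free prosupersolvable groups), not an arbitrary prosupersolvable $\Gamma$ satisfying Schreier's formula; extracting this structure in general is precisely the refinement the theorem is making, so the citation is unavailable and essentially circular. The paper has to earn this step: the commutator $\Gamma'$ is pronilpotent (\corref{NilComCor}), a nontrivial $p$-Sylow $N$ of $\Gamma'$ is normal in $\Gamma$, a $p$-Sylow $T\supseteq N$ of $\Gamma$ is finitely generated by \cite[Corollary 3.9]{OR}, and then \lemref{KeyLem} --- a finitely generated closed subgroup containing a nontrivial closed normal subgroup of a group satisfying Schreier's formula has finite index --- forces $[\Gamma:T]<\infty$; a separate argument (at most one prime can make $\Gamma$ virtually pro-$q$) gives $\Gamma'=N$ and $A$ abelian. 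None of this is in your proposal. Your argument for $\exp(A)\mid p-1$ also has a gap: the failure of $\mathbb{F}_p$ to be a splitting field for $A$ does not by itself force the particular module $V=T/\varPhi(T)$ to contain a higher-dimensional irreducible constituent, so you get no contradiction without an additional counting or faithfulness argument; the paper instead builds an explicit $n$-generated nonsupersolvable quotient via \propref{NdivPm1Prop} and an embedding problem.

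The quantitative steps are also not sound as stated. You repeatedly identify $d(H)$ with the rank of the maximal pro-$p$ quotient of $H$ (plus a comparison with $d(A_H)$), but for a coprime extension this fails badly: the dihedral group of order $2p$ needs two generators while its maximal pro-$p$ quotient is trivial. Consequently, Schreier's formula applied to $H_B=TB$ only yields $\dim V^B\le (n-1)[A:B]+1$, not the equality your M\"obius inversion requires; and indeed the inequalities $\dim V^B\le(n-1)[A:B]+1$ alone do not pin down the multiplicities (e.g.\ for $A=C_2$ they admit $m_1=n-1$, $m_\chi=n$). To convert generator counts into multiplicities one needs Gasch\"utz's criterion \cite[Satz 4]{G} for $d(V\rtimes Q)$, after which only $B=A$ and $B=1$ are needed; alternatively one must justify the Reidemeister--Schreier module computation for every open $H$ in direction $(2)\Rightarrow(1)$, which you assert but do not perform. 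The paper sidesteps all of this: for $(2)\Rightarrow(1)$ it computes only $d(\Gamma)\le n$ via $\varPhi(T)\le\varPhi(\Gamma)$ and Gasch\"utz, and invokes \cite[Lemma 2.4]{LD} to transfer Schreier's formula from the single subgroup $T$ to $\Gamma$; for the module structure in $(1)\Rightarrow(2)$ it never computes multiplicities at all, instead realizing both $\Gamma$ and the model group $\Lambda=T\rtimes A$ as $F/R_p(U)$ for the same open $U$ in a free profinite group $F$, via two embedding problems and the Hopfian property. You would also need to justify lifting the $A$-module structure on $V$ to the prescribed action on $T$ itself, which is an extra (coprime-action) step your sketch elides and the paper's rigidity argument avoids.
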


Some other works that study discrete and profinite groups through the numbers of generators of finite index subgroups are \cite{AJN, AN, Gi, KN, KZ, Lack2, Lack, O, Sch, Sh0, Sh1, S1}.

Additional motivation for the study of Schreier's formula comes from the connection (see \cite{LD}) to the problem of extending the Nielsen-Schreier theorem to free profinite groups, 
which has been explored, for instance, in \cite{B, FJ, H, J, JL, JL2, M, RZ, S}.
Our contribution in this direction is the following corollary of \thmref{FirstRes}.

\begin{cor} \label{SecondRes}

Let $F$ be a nonabelian finitely generated free profinite group, and let $N \lhd_c F$ be a subgroup with $F/N$ prosupersolvable. Then $N$ is a free profinite group if and only if $F/N$ does not satisfy Schreier's formula for $d(F)$.

\end{cor}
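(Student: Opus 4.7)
Set $G := F/N$ and $n := d(F) \geq 2$. By \thmref{FirstRes}, the condition that $G$ satisfies Schreier's formula for $n$ is equivalent to the existence of an isomorphism $G \cong T \rtimes A$ with the structure specified there. So \corref{SecondRes} reduces to the equivalence: \emph{$N$ is a free profinite group if and only if no such isomorphism exists}.

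For the direction ``$G \cong T \rtimes A$ implies $N$ is not free profinite'', I would let $F_0 \leq F$ be the preimage of $T$ under $F \twoheadrightarrow G$. Since $A$ is finite, $F_0$ is open in $F$, hence free profinite, and Nielsen--Schreier gives $d(F_0) = (n-1)|A|+1 = d(T)$. As $T$ is pro-$p$, the surjection $F_0 \twoheadrightarrow T$ factors through the pro-$p$ completion $F_0^{\hat p}$, which is free pro-$p$ of the same finite rank as $T$, so the induced map is an isomorphism; in particular $N = \ker(F_0 \to F_0^{\hat p})$. The five-term exact sequence in continuous $\mathbb{F}_p$-homology for $1 \to N \to F_0 \to T \to 1$, together with $H_2(F_0, \mathbb{F}_p) = H_2(T, \mathbb{F}_p) = 0$ and the fact that $H_1(F_0, \mathbb{F}_p) \to H_1(T, \mathbb{F}_p)$ is an isomorphism, forces $(N^{\mathrm{ab}} \otimes \mathbb{F}_p)_T = 0$. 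Since $T$ is pro-$p$, every nonzero profinite $\mathbb{F}_p[[T]]$-module has nonzero $T$-coinvariants, so $N^{\mathrm{ab}} \otimes \mathbb{F}_p = 0$, i.e., $N$ admits no continuous surjection to $\mathbb{Z}/p$. Because $F$ is free profinite and hence not prosupersolvable, $F \neq G$ and so $N \neq 1$; as nontrivial free profinite groups always surject onto $\mathbb{Z}/p$, this rules out $N$ being free profinite.

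For the reverse direction I would verify Iwasawa's criterion: a countably based projective profinite group is free profinite if and only if every nontrivial finite group arises as a continuous quotient. Projectivity of $N$ is automatic since closed subgroups of projective profinite groups are projective. Given a finite group $Q$, the plan is to solve an appropriate split embedding problem for $F$ involving a suitable $G$-module built from $Q$, invoke the projectivity of $F$, and arrange, using the hypothesis $G \not\cong T \rtimes A$, for the solution to induce a surjection $N \twoheadrightarrow Q$. The failure of Schreier's formula, equivalently a strict inequality $d(H) < (n-1)[G:H]+1$ at some open $H \leq G$, provides the extra ``slack'' needed to route the quotient through $N$ rather than through $G$.

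The principal obstacle is this reverse direction: converting the abstract failure of Schreier's formula into a systematic construction of continuous surjections of $N$ onto arbitrary finite groups requires an embedding-problem argument carefully adapted to the prosupersolvable structure of $G$ permitted by the negation of \thmref{FirstRes}.
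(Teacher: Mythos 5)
Your forward direction (Schreier's formula holds $\Rightarrow$ $N$ not free) is correct and is essentially the paper's argument: both identify $N$ with $R_p(U)$ for $U$ the preimage of $T$, by comparing the maximal pro-$p$ quotient of the open free group $U$ (free pro-$p$ of rank $(n-1)|A|+1$ by the profinite Nielsen--Schreier formula) with $T$ and invoking the Hopfian property. Where the paper then simply cites \cite[Lemma 3.4.1 (e)]{RZ} to conclude that $R_p(U)$ admits no surjection onto $\mathbb{Z}/p\mathbb{Z}$, you supply a self-contained five-term-sequence argument; that argument is sound (the Nakayama-type claim that a nonzero profinite $\mathbb{F}_p[[T]]$-module has nonzero $T$-coinvariants holds for pro-$p$ $T$ by passing to finite quotient modules), so this half is fine, and your observation that $N\neq 1$ because a nonabelian free profinite group is not prosupersolvable matches the logic needed.

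The reverse direction is a genuine gap: you do not prove it, you only outline a strategy (Iwasawa's criterion plus unspecified embedding problems) and you yourself flag it as the principal obstacle. The missing step --- converting the strict inequality $d(U/N) < (n-1)[F:U]+1$ at some open $U$ into surjections of $N$ onto every finite group --- is precisely the content of \cite[Theorem 3.1]{LD} (Lubotzky--van den Dries), which holds for arbitrary $N \lhd_c F$ with no prosupersolvability hypothesis; the paper does not reprove it but explicitly invokes it (see the remark following \corref{SecondRes} in the introduction). So your reduction via \thmref{FirstRes} to ``$G \not\cong T \rtimes A$'' is not even needed here: one should use the failure of Schreier's formula directly and cite (or reprove) that theorem. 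As written, your proposal establishes only the ``only if'' half of the corollary; to close it you must either quote \cite[Theorem 3.1]{LD} or actually carry out the twisted embedding-problem construction whose existence you assert, which is a substantial argument and not a routine consequence of projectivity of $N$.
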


Note that the `if' part is known to be true (in general) by \cite[Theorem 3.1]{LD}.

\section{Prosupersolvable groups}

We recall some basic facts about finite supersolvable groups, and state their generalizations to profinite groups, the (routine) proofs of which are omitted. 
For example, the following is a consequence of \cite[Theorem 10.5.4]{Hall}.

\begin{cor} \label{NilComCor}

The commutator of a prosupersolvable group is pronilpotent.

\end{cor}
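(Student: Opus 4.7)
The plan is to reduce the assertion to the finite case, where Hall's theorem is directly applicable. Let $\Gamma$ be a prosupersolvable group and let $\Gamma' \defeq \overline{[\Gamma,\Gamma]}$ denote its (topological) commutator. Since $\Gamma'$ is itself a profinite group, to show that it is pronilpotent it suffices to verify that every continuous finite quotient of $\Gamma'$ is nilpotent, i.e. that $\Gamma'/M$ is nilpotent for every open normal subgroup $M \lhd_o \Gamma'$.

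First I would show that the subgroups of the form $\Gamma' \cap N$, with $N$ ranging over the open normal subgroups of $\Gamma$, are cofinal in the collection of open normal subgroups of $\Gamma'$. This follows because $\Gamma'$ carries the subspace topology, so any $M \lhd_o \Gamma'$ contains $\Gamma' \cap U$ for some open neighborhood $U$ of the identity in $\Gamma$, and $U$ in turn contains an open normal subgroup $N$ of $\Gamma$. Consequently, it is enough to show that $\Gamma'/(\Gamma' \cap N)$ is nilpotent for every $N \lhd_o \Gamma$.

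The next step is the standard identification
\[
\Gamma'/(\Gamma' \cap N) \cong \Gamma' N / N = \overline{[\Gamma,\Gamma]}\, N / N = [\Gamma/N,\Gamma/N],
\]
where the last equality uses that $\Gamma/N$ is finite, so the closure in $\Gamma/N$ is automatic. Since $\Gamma$ is prosupersolvable, $\Gamma/N$ is a finite supersolvable group, and Hall's theorem (\cite[Theorem 10.5.4]{Hall}) gives that its commutator subgroup $[\Gamma/N,\Gamma/N]$ is nilpotent. Combining these observations yields that every finite continuous quotient of $\Gamma'$ is nilpotent, proving that $\Gamma'$ is pronilpotent.

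The argument is essentially a transfer of a property of finite quotients to the inverse limit, with no real obstacle; the only point requiring care is the cofinality of the family $\{\Gamma'\cap N\}_N$, which comes down to invoking the subspace topology correctly.
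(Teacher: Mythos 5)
Your argument is correct and is precisely the ``routine'' reduction the paper has in mind: the paper omits the proof and simply cites \cite[Theorem 10.5.4]{Hall} (the commutator subgroup of a finite supersolvable group is nilpotent), and your proposal supplies the standard cofinality/finite-quotient transfer that makes this a proof for profinite groups. No discrepancy with the paper's approach.
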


The next corollary follows from \cite[Proposition 4.4]{P}.

\begin{cor} \label{DivPm1Cor}

Let $G$ be a profinite group for which there exist a prime number $p$ and a normal $p$-Sylow subgroup $P$ of $G$, 
such that $G/P$ is an abelian group of exponent dividing $p-1$. Then $G$ is prosupersolvable.

\end{cor}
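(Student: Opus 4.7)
The plan is to reduce to the finite-group statement \cite[Proposition 4.4]{P} via the standard criterion that a profinite group is prosupersolvable precisely when each of its finite continuous quotients is supersolvable. Accordingly, I would fix an open normal subgroup $N \lhd G$, set $Q \defeq G/N$, and verify that the hypotheses of the cited finite result transfer to $Q$.

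Writing $\bar{P}$ for the image of $P$ in $Q$, I would first note that $\bar{P}$ is a normal $p$-subgroup of $Q$, being the image of the closed normal pro-$p$ subgroup $P$. The further quotient $Q/\bar{P} \cong G/PN$ is a continuous quotient of $G/P$, and is therefore abelian of exponent dividing $p-1$. Consequently every prime divisor of $|Q/\bar{P}|$ divides $p-1$, so in particular $p \nmid |Q/\bar{P}|$, which forces $\bar{P}$ to be \emph{the} normal Sylow $p$-subgroup of $Q$. With $\bar{P}$ playing the role of $P$ and $Q/\bar{P}$ playing the role of $G/P$, the finite group $Q$ now satisfies the hypotheses of \cite[Proposition 4.4]{P}, which yields supersolvability of $Q$. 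Since $N$ was arbitrary, $G$ is prosupersolvable.

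The only step requiring a moment's thought is the passage from ``$G/P$ has exponent dividing $p-1$'' to ``$|Q/\bar{P}|$ is coprime to $p$'', which rests on the routine observation that a profinite abelian group of bounded exponent $e$ has supernatural order supported on the primes dividing $e$. Beyond this the proof is entirely formal bookkeeping, and I do not anticipate any genuine obstacle.
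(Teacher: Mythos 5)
Your argument is correct and is precisely the routine reduction the paper has in mind: it states that the corollary ``follows from'' the finite-group result \cite[Proposition 4.4]{P} and omits the details, which are exactly the passage to finite quotients and the verification that the hypotheses descend as you have written them. No issues.
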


Let us show that the assumption on the exponent of the quotient is necessary.

\begin{prop} \label{NdivPm1Prop}

Let $p$ be a prime, let $A$ be a finite abelian group of order prime to $p$, and suppose that $e \defeq \exp(A) \nmid p-1$. 
Then there exists a nonsupersolvable finite group $G$, that is an extension of $A$ by a $p$-group, 
such that $d(G) \leq \max\{d(A),2\}$.


\end{prop}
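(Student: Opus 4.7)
My plan is to realize $G$ as a semidirect product $V \rtimes A$ in which $V$ is an elementary abelian $p$-group carrying an irreducible $\mathbb{F}_p[A]$-module structure of $\mathbb{F}_p$-dimension at least $2$; such a $V$ becomes a non-cyclic minimal normal subgroup of $G$, which prevents $G$ from being supersolvable. The generator bound will follow from a careful choice of elements in the semidirect product.

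To build $V$, pick a prime $q \mid e$ with $q \nmid p-1$ (such a $q$ exists because $e \nmid p-1$), and let $f$ be the multiplicative order of $p$ modulo $q$, so $f \geq 2$. Since $A$ is abelian with $q \mid \exp(A)$, there is a surjective character $\chi \colon A \twoheadrightarrow \mu_q \subseteq \mathbb{F}_{p^f}^{\times}$, where $\mu_q$ denotes the group of $q$-th roots of unity. Take $V \defeq \mathbb{F}_{p^f}$ (as additive group), with $A$-action $a \cdot v \defeq \chi(a) v$. Any $A$-stable $\mathbb{F}_p$-subspace $W \subseteq V$ is closed under multiplication by the $\mathbb{F}_p$-subalgebra $\mathbb{F}_p[\mu_q]$, which equals $\mathbb{F}_{p^f}$ by the definition of $f$; hence $W$ is an $\mathbb{F}_{p^f}$-subspace of the one-dimensional $\mathbb{F}_{p^f}$-space $V$, so $W = 0$ or $W = V$. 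Thus $V$ is irreducible of order $p^f > p$, and so non-cyclic as a group.

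Set $G \defeq V \rtimes A$. By irreducibility, $V$ is a minimal normal subgroup of $G$, and its non-cyclicity makes it a non-cyclic chief factor, so $G$ is not supersolvable. To bound $d(G)$ when $d(A) \geq 2$, pick generators $a_1, \dots, a_k$ of $A$ with $k = d(A)$ arranged so that some $a_i$ with $i \geq 2$ lies outside $\ker \chi$ (one can always arrange this by swapping, since the $a_i$ cannot all lie in the proper subgroup $\ker \chi$), fix any nonzero $v_0 \in V$, and verify that $(v_0, a_1), (0, a_2), \dots, (0, a_k)$ generate $G$: they project onto $A$, and the commutator $[(0, a_i), (v_0, a_1)]$ computes to $\bigl((\chi(a_i)-1)v_0,\,1\bigr)$, a nonzero element of $V$ whose $A$-orbit spans $V$ (since $V$ is one-dimensional over $\mathbb{F}_p[\mu_q] = \mathbb{F}_{p^f}$). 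When $d(A) = 1$ the pair $(v_0, 1),\,(0, a_1)$ works by the same reasoning, giving $d(G) \leq 2$.

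The main obstacle I expect is establishing the irreducibility of $V$; this is where the hypothesis $q \nmid p-1$ is essentially used, through $\mathbb{F}_p[\mu_q] = \mathbb{F}_{p^f}$ with $f \geq 2$. Without that hypothesis, $V$ would decompose into one-dimensional $A$-invariant summands and $G$ would remain supersolvable, matching the converse direction recorded in \corref{DivPm1Cor}.
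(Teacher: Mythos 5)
Your overall strategy---realizing $G$ as $V \rtimes A$ with $V$ the additive group of a finite field, made into an irreducible $\mathbb{F}_p[A]$-module of dimension at least $2$ via a character of $A$, so that $V$ is a non-cyclic chief factor---is exactly the paper's construction, and your explicit generator count is a reasonable, more self-contained substitute for the paper's appeal to a general generation bound of Aschbacher--Guralnick. However, there is a genuine gap at the very first step: from $e \nmid p-1$ you cannot conclude that some \emph{prime} $q$ divides $e$ but not $p-1$. Take $p = 3$ and $A = \mathbb{Z}/4\mathbb{Z}$: then $|A| = 4$ is prime to $3$ and $e = 4 \nmid 2 = p-1$, yet the only prime dividing $e$ is $2$, which does divide $p-1$. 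In this case your construction cannot get started: $\mu_2 \subseteq \mathbb{F}_p^{*}$ already, so $f = 1$, $V$ is one-dimensional, and the resulting $G$ is supersolvable. The correct statement---and the one the paper uses---is that some \emph{prime power} $r$ dividing $e$ (one of the pairwise coprime prime-power parts of $e$, since if all of these divided $p-1$ then so would their product $e$) fails to divide $p-1$, and that $A$ surjects onto $\mathbb{Z}/r\mathbb{Z}$. Replacing your $q$ by such an $r$ throughout, with $f$ the least integer such that $r \mid p^f - 1$ (so $f \geq 2$), $\chi$ mapping onto $\mu_r \subseteq \mathbb{F}_{p^f}^{*}$, and $\mathbb{F}_p[\mu_r] = \mathbb{F}_{p^f}$, repairs the argument; the rest of your proof (irreducibility of $V$, non-supersolvability via the non-cyclic minimal normal subgroup, and the explicit generators) then goes through.
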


\begin{proof}

It follows from the structure theorem for finitely generated abelian groups that $A$ is isomorphic to a direct product of cyclic groups whose orders are powers of primes, 
and $e$ is the least common multiple of these powers. 
Since $e \nmid p - 1$, we see that one of the aforementioned prime powers does not divide $p-1$.
Hence, there exists a prime power $r = q^n \nmid p-1$, and an epimorphism $\lambda \colon A \to \mathbb{Z}/r\mathbb{Z}$. Since $r \mid |A|$, it is prime to $p$, so there exists a minimal integer $k > 1$ such that $r \mid p^k - 1$. 
As $\mathbb{F}_{p^k}^*$ is cyclic, there exists some $t \in \mathbb{F}_{p^k}^*$ of order $r$. 
Denoting by $P$ the additive group of $\mathbb{F}_{p^k}$, we see that $\mathbb{Z}/r\mathbb{Z}$ acts on it via multiplication by $t$. 
This action gives $P$ the structure of an $A$-module through $\lambda$, and we define $G \defeq P \rtimes A$.

Let us show that $P$ is a simple $A$-module. 
For that, take a nontrivial submodule $R$ of $P$, and note that $R$ is closed under multiplication by $t$, and thus also under multiplication by $\mathbb{F}_p[t]$ which is some subfield $\mathbb{F}_{p^{\ell}} \subseteq \mathbb{F}_{p^k}$.
As $t \in \mathbb{F}_{p^\ell}^*$, we conclude that $r \mid p^{\ell}-1$, so the minimality of $k$ implies that $k \leq \ell$ and thus $\mathbb{F}_p[t] = \mathbb{F}_{p^k}$. 
It follows that $0 \lneq R \leq \mathbb{F}_{p^k}$ is closed under multiplication by $\mathbb{F}_{p^k}$, so $R = \mathbb{F}_{p^k}$. 

Suppose toward a contradiction that $G$ is supersolvable. 
As $P \lhd G$, there exists a chief series (with cyclic chief factors) that contains $P$. 
Since $P$ is not cyclic (as $k > 1$) our series contains a nontrivial proper subgroup of $P$ that is normal in $G$. 
This is a contradiction to the $A$-simplicity of $P$.
The bound on the number of generators of $G$ follows from \cite[Theorem C]{AG}.

\end{proof} 

\section{Schreier's formula}

We present a lemma that plays a key role in our proof of \thmref{FirstRes}. 
This lemma generalizes \cite[Main Theorem, Theorem 2.3]{J}.
A similar argument can be found in \cite[Theorem 2.8]{S1}. 

\begin{lemma} \label{KeyLem}

Let $\Gamma$ be a finitely generated nonprocyclic profinite group satisfying Schreier's formula, 
and let $H \leq_c \Gamma$ be a finitely generated subgroup containing a nontrivial normal subgroup $N \lhd_c \Gamma$. Then $[\Gamma : H] < \infty$.

\end{lemma}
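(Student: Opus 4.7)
The plan is to argue by contradiction. Set $n = d(\Gamma) \geq 2$ (using that $\Gamma$ is nonprocyclic) and $m = d(H) < \infty$, and suppose that $[\Gamma : H] = \infty$.

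First I would exhibit a sequence of open subgroups of $\Gamma$ containing $H$ with indices tending to infinity. Since $H$ is closed, $H = \bigcap \{HU : U \lhd_o \Gamma\}$. Picking a descending chain $U_1 \supseteq U_2 \supseteq \cdots$ of open normal subgroups of $\Gamma$ with $\bigcap_i U_i = \{1\}$ (arranging also $N \not\subseteq U_i$, which is possible since $N \neq 1$), the subgroups $K_i \defeq HU_i$ are open in $\Gamma$, contain $H$, and satisfy $[\Gamma:K_i] \to \infty$ as $i \to \infty$. Schreier's formula applied to each $K_i$ gives
\[
d(K_i) = (n-1)[\Gamma:K_i] + 1,
\]
which therefore tends to infinity.

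The heart of the argument is to produce a competing upper bound on $d(K_i)$. The natural starting point is the surjection $K_i \twoheadrightarrow K_i/U_i \cong H/(H \cap U_i)$, which shows $d(K_i/U_i) \leq m$, yielding $d(K_i) \leq m + e_i$, where $e_i$ denotes the minimum number of $K_i$-normal generators of $U_i$. To bound $e_i$ from above, I would exploit the full set of hypotheses: since $N \lhd \Gamma$, $N \leq H$, and $N \not\subseteq U_i$, the nontrivial normal subgroup $NU_i/U_i$ of $\Gamma/U_i$ is generated by $\Gamma$-conjugates of a single element of $N$; combined with the Reidemeister--Schreier bound $d(H \cap U_i) \leq (m-1)[K_i:U_i] + 1$ inside $H$ and Schreier's formula applied to the open subgroup $U_i$ of $\Gamma$, one should obtain a bound on $e_i$ that grows strictly slower than $(n-1)[\Gamma:K_i]$.

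The main obstacle will be making this comparison precise, in particular tracking how many $K_i$-conjugates (as opposed to $\Gamma$-conjugates) are needed to normally generate $U_i$, and how the normal subgroup $N$ supplies the missing symmetry. This is the point at which the argument should parallel \cite[Main Theorem, Theorem 2.3]{J} for free profinite groups and \cite[Theorem 2.8]{S1}, where the presence of a nontrivial normal subgroup of $\Gamma$ inside a finitely generated closed subgroup supplies exactly the redundancy needed to contradict Schreier's formula. Once $m + e_i < (n-1)[\Gamma:K_i] + 1$ is established for all sufficiently large $i$, the assumption $[\Gamma:H] = \infty$ is untenable, and the lemma follows.
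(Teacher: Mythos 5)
There is a genuine gap at the heart of your argument: the bound on $e_i$, the number of $K_i$-normal generators of $U_i$, is exactly the hard part, and the ingredients you list do not produce it. Note first that \emph{some} essential use of $N$ is unavoidable: if $\Gamma$ is free profinite of rank $n=2$ and $H$ is procyclic (no nontrivial $N$ available, but ignore that for the moment), then $d(K_i)=[\Gamma:K_i]+1$ while $d(K_i/U_i)\leq 1$, so $e_i\geq [\Gamma:K_i]$ grows exactly like $(n-1)[\Gamma:K_i]$; so the desired estimate $e_i=o\bigl((n-1)[\Gamma:K_i]\bigr)$ cannot follow from the general shape of the situation. Your only proposed input from $N$ is that $NU_i/U_i$ is normally generated in $\Gamma/U_i$ by one element of $N$, but that is a statement about the subgroup $NU_i/U_i$ of the finite quotient, not about normal generators of $U_i$ inside $K_i$, and I do not see how it, together with the Reidemeister--Schreier bound on $d(H\cap U_i)$, converts into control of $e_i$. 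You acknowledge the obstacle yourself, but the lemma is precisely this obstacle; as written the proposal defers the entire content of the proof to the references.

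The paper resolves this by applying Schreier's formula to a different subgroup: not to $K=HU$, but to a small open \emph{normal} subgroup $U\leq V$ chosen with $N\not\leq U$, where $V\lhd_o\Gamma$ satisfies $[\Gamma:HV]>2d(H)/(d(\Gamma)-1)$. The key tool is the inequality $d(G)\leq d(L)+d(G/K)$ for $K\lhd G$ with $K\leq L\leq G$ (from \cite[Lemma 2.7]{S1}), applied with $G=U$, $L=H\cap U$, $K=N\cap U$; this is where the hypotheses $N\leq H$ and $N\lhd_c\Gamma$ enter. The first term is at most $d(H)[HU:U]=d(H)[\Gamma:U]/[\Gamma:HU]$, which is small relative to $[\Gamma:U]$ because $[\Gamma:HU]\geq[\Gamma:HV]$ is large; the second term equals $d(UN/N)\leq d(UN)=(d(\Gamma)-1)[\Gamma:UN]+1\leq (d(\Gamma)-1)[\Gamma:U]/2+1$ because $N\not\leq U$ forces $[UN:U]\geq 2$. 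Summing gives $d(U)<(d(\Gamma)-1)[\Gamma:U]+1$, contradicting Schreier's formula for $U$. If you want to rescue your outline, the fix is to replace the pair ($K_i$, normal generators of $U_i$) by this decomposition of $d(U)$ itself.
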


\begin{proof}

Toward a contradiction, suppose that $[\Gamma : H] = \infty$, 
so the index of the image of $H$ under epimorphisms from $\Gamma$ to finite groups can be arbitrarily large. 
For instance, there exists some $V \lhd_o \Gamma$ such that
\begin{equation} \label{VIndexEq}
[\Gamma : HV] > \frac{2d(H)}{d(\Gamma)-1}.
\end{equation} 
Since $N \neq 1$, we can take a $U \lhd_o \Gamma$ that is contained in $V$ but does not contain $N$.
Using \cite[Lemma 2.7]{S1}, and \cite[Corollary 3.6.3]{RZ} we obtain
\begin{equation}
\begin{split}
d(U) &\leq d(H \cap U) + d(U/N \cap U) \leq d(H)[H : H \cap U]  + d(UN/N) \\
&\leq d(H)[HU : U] + d(UN) = \frac{d(H)[\Gamma : U]}{[\Gamma : HU]} + (d(\Gamma)-1)[\Gamma : UN] + 1 \\ 
&\leq \frac{d(H)[\Gamma : U]}{[\Gamma : HV]} + \frac{(d(\Gamma)-1)[\Gamma : U]}{2} + 1 \stackrel{\ref{VIndexEq}}{<} (d(\Gamma)-1)[\Gamma : U] + 1
\end{split}
\end{equation}
so we have a contradiction to the fact that Schreier's formula holds for $\Gamma$.
\end{proof} 

\section{The proof of \thmref{FirstRes} and its corollary}

Let us start by proving the easier direction, namely $(2) \Rightarrow (1)$.

\begin{proof}

Clearly, $T$ is a pro-$p$ normal subgroup of $\Gamma$ and $\Gamma/T \cong A$ that is of order prime to $p$, 
as its exponent divides $p-1$.
Thus, by \corref{DivPm1Cor}, $\Gamma$ is prosupersolvable.
Since $T \lhd_c \Gamma$, 
we infer from \cite[7.3.17]{Sc} that the Frattini subgroups satisfy $\varPhi(T) \leq \varPhi(\Gamma)$.
We can thus bound the number of generators of $\Gamma$ from above by 
\begin{equation} \label{FrattiniGenEq}
\begin{split}
d(\Gamma) &= d(\Gamma/\varPhi(\Gamma)) \leq d(\Gamma/\varPhi(T)) = d(T/\varPhi(T) \rtimes A) \\
&\stackrel{\ref{ActDefEq}}{=} d(C_p \times (C_p^{n-1} \ \mathrm{wr} \ A)) \leq n
\end{split}
\end{equation}
where $C_p$ is the cyclic group of order $p$, and the last inequality is a consequence of \cite[Satz 4]{G}.
Since $T$ is free pro-$p$, it satisfies Schreier's formula for $(n-1)|A| + 1$, so by \cite[Lemma 2.4]{LD},
$\Gamma$ satisfies Schreier's formula for $n$.

\end{proof}

Now we prove that $(1) \Rightarrow (2)$. 

\begin{proof}

Let $\Gamma'$ be the commutator of $\Gamma$ and note that $\Gamma' \neq \{1\}$ since otherwise $\Gamma$ would have been abelian, and would not satisfy Schreier's formula, as can be seen from \cite[Proposition 4.3.6]{RZ}.
Therefore, there exists a prime number $p$ and a nontrivial $p$-Sylow subgroup $N$ of $\Gamma'$.
By \corref{NilComCor}, $N$ is a characteristic subgroup of $\Gamma'$ and thus a normal subgroup of $\Gamma$ (since $\Gamma' \lhd_c \Gamma$).
Let $T$ be a $p$-Sylow subgroup of $\Gamma$ containing $N$, and note that it is finitely generated by \cite[Corollary 3.9]{OR}.
It follows from \lemref{KeyLem} that $[\Gamma : T] < \infty$, so $\Gamma$ is virtually pro-$p$.

In the preceding paragraph we have in fact shown that for every prime number $q \mid |\Gamma'|$, the group $\Gamma$ is virtually pro-$q$.
Since $\Gamma$ satisfies Schreier's formula, it is infinite, so it is virtually pro-$q$ for at most one prime $q$ (namely, $p$).
In other words, $\Gamma' = N$ so $A \defeq \Gamma/T$ is a (finite) abelian group since $T$ contains $N$. 
Furthermore, by \cite[Lemma 2.4]{LD}, $T$ satisfies Schreier's formula for $(n-1)|A| + 1$, 
so we can think of $T$ as the free pro-$p$ group on the set $\big( \{1, \dots, n-1\} \times A \big) \cup \{u\}$. 

Let $F$ be a free profinite group on $n$ generators, 
let $\gamma \colon F \to \Gamma$ be a surjection, 
and set $M \defeq \Ker(\gamma), \ U \defeq \gamma^{-1}(T)$.
Let $R_p(U)$ be the intersection of all open subgroups of $U$ whose index is a power of $p$. By \cite[Lemma 3.4.1]{RZ}, $U/R_p(U)$ is the largest pro-$p$ quotient of $U$, 
so in particular $R_p(U) \leq M$ as $U/M \cong T$ is a pro-$p$ group.
By \cite[Theorem 3.6.2]{RZ}, $U$ is a free profinite group on $(n-1)|A| + 1$ generators, 
so by \cite[Proposition 3.4.2]{RZ}, $U/R_p(U)$ is a free pro-$p$ group on the same number of generators.
Since $R_p(U) \leq M$, the group $U/R_p(U)$ surjects onto $U/M \cong T$, 
which is a free pro-$p$ group with the same number of generators as $U/R_p(U)$ (and is thus isomorphic to $U/R_p(U)$).
By \cite[Proposition 2.5.2]{RZ}, this surjection is injective, so its kernel $M/R_p(U)$ is trivial, 
and we conclude that $M = R_p(U)$.

Suppose toward a contradiction that $\exp(A) \nmid p-1$.
By \propref{NdivPm1Prop}, there exists an $n$-generated nonsupersolvable finite group $G$, 
that is an extension of $A$ by a $p$-group.
It follows from \cite[Theorem 3.5.8]{RZ}, that the embedding problem
\begin{equation*}
\begin{tikzpicture}[scale=1.5]
\node (B) at (1.6,1.4) {$F$};
\node (C) at (0,0) {$G$};
\node (D) at (1.6,0) {$A$};
\path[->,font=\scriptsize,>=angle 90]
(B) edge node[right]{$\mod U$} (D)
(C) edge node[above]{} (D);
\end{tikzpicture}
\end{equation*} 
is properly solvable, by an epimorphism $\tau \colon F \to G$.
Since $\tau$ is a solution, we see that $U/\Ker(\tau)$ is a $p$-group, so $\Ker(\tau) \geq R_p(U) = M$.
We conclude that $\Gamma \cong F/M$ surjects onto $G \cong F/\Ker(\tau)$, 
which contradicts the prosupersolvability of $\Gamma$.

Set $\Lambda \defeq T \rtimes A$, where the action of $A$ on $T$ is the one described in \thmref{FirstRes}.
Since we already know that $(2) \Rightarrow (1)$ in \thmref{FirstRes}, we infer that $\Lambda$ is $n$-generated,
so the embedding problem
\begin{equation*}
\begin{tikzpicture}[scale=1.5]
\node (B) at (1.6,1.4) {$F$};
\node (C) at (0,0) {$\Lambda$};
\node (D) at (1.6,0) {$A$};
\path[->,font=\scriptsize,>=angle 90]
(B) edge node[right]{$\mod U$} (D)
(C) edge node[above]{$\mod T$} (D);
\end{tikzpicture}
\end{equation*} 
is properly solvable by an epimorphism $\lambda \colon F \to \Lambda$, in view of \cite[Theorem 3.5.8]{RZ}.
Since $U/\Ker(\lambda) \cong T$, 
using the same reasoning as for $\Ker(\gamma)$, we conclude that $\Ker(\lambda) = R_p(U)$ as well.
Hence, 
\begin{equation}
\Gamma \cong F/\Ker(\gamma) = F/R_p(U) = F/\Ker(\lambda) \cong \Lambda.
\end{equation}
\end{proof}

Let us now deduce \corref{SecondRes}.

\begin{proof}

Suppose that $F/N$ satisfies Schreier's formula for $d(F)$.
By \thmref{FirstRes}, 
there exists a subgroup $N \leq U \lhd_o F$ such that $U/N$ is a free pro-$p$ group on a set of cardinality $(d(F) - 1)[F : U] + 1$. 
An argument from the proof of \thmref{FirstRes} shows that $N = R_p(U)$, so by \cite[Lemma 3.4.1 (e)]{RZ}, 
$N$ does not surject onto $\mathbb{Z}/p\mathbb{Z}$ and it is thus not a free profinite group.

\end{proof}

\section{Acknowledgments}

The author would like to thank Lior Bary-Soroker for many helpful conversations.

\end{document}